\newtheorem{theorem}{Theorem}[section]
\newtheorem{proposition}[theorem]{Proposition}
\newtheorem{corollary}[theorem]{Corollary}
\newtheorem{lemma}[theorem]{Lemma}
\newtheorem{exmp}[theorem]{Example}
\newtheorem{definition}[theorem]{Definition}
\newcommand{\abf}{\mathbf{a}}
\newcommand{\Id}{\mathrm{Id}}
\newcommand{\Ima}{\mathrm{Im}}
\newcommand{\homr}{\mathrm{Hom}_{\mathrm{Ring}}}
\newcommand{\mM}{\mathcal{M}}
\def\f{\mathbb{F}}
\def\n{\mathbb{N}}
\def\z{\mathbb{Z}}
\def\inv{^{-1}}
\definecolor{codegreen}{rgb}{0,0.6,0}
\definecolor{codegray}{rgb}{0.5,0.5,0.5}
\definecolor{codepurple}{rgb}{0.58,0,0.82}
\definecolor{backcolour}{rgb}{0.95,0.95,0.92}
\lstdefinestyle{mystyle}{
    backgroundcolor=\color{backcolour},   
    commentstyle=\color{codegreen},
    keywordstyle=\color{magenta},
    numberstyle=\tiny\color{codegray},
    stringstyle=\color{codepurple},
    basicstyle=\ttfamily\footnotesize,
    breakatwhitespace=false,         
    breaklines=true,                 
    captionpos=b,                    
    keepspaces=true,                 
    numbers=left,                    
    numbersep=5pt,                  
    showspaces=false,                
    showstringspaces=false,
    showtabs=false,                  
    tabsize=2
}
\begin{document}

\title[]{Towards an Enumeration of Finite Common Meadows}

\author[]{João Dias}
\author[]{Bruno Dinis}

\address[Bruno Dinis]{Departamento de Matemática, Universidade de Évora}
\email{bruno.dinis@uevora.pt}

\address[João Dias]{Departamento de Matemática, Universidade de Évora}
\email{joao.miguel.dias@uevora.pt}

\subjclass[2010]{16U90, 06B15, 13M99, 05A17}

\keywords{finite meadows, finite rings, lattices, enumeration, partitions}

\begin{abstract}
Common meadows are commutative and associative algebraic structures with two operations (addition and multiplication) with additive and multiplicative identities and for which inverses are total. The inverse of zero is an error term $\abf$ which is absorbent for addition. We study the problem of enumerating all finite common meadows of \emph{order} $n$ (that is, common meadows with $n$ elements). This problem turns out to be deeply connected with both the number of finite rings of order $n$ and with the number of a certain kind of partition of positive integers.
\end{abstract}

\maketitle

\section{Introduction}

Meadows are algebraic structures with two operations (addition and multiplication), introduced by Bergstra and Tucker in \cite{Bergstra2006}. Perhaps the most interesting feature of these algebraic structures, which was one of the main motivations to introduce and study them, is that they allow to divide by zero, i.e.\ both addition and multiplication are operations for which the inverses are total. In particular, common meadows (introduced by Bergstra and Ponse in \cite{Bergstra2015}), which can be decomposed as disjoint unions of rings \cite{Dias_Dinis(23)}, enable to invert zero by introducing a term $\abf$, such that $0\inv=\abf$.

For the most part, meadows have been studied as abstract data types given by equational axiomatizations \cite{Bergstra2008,BP(20),10.1145/1219092.1219095,bergstra2020arithmetical}. These allow to obtain simple term rewriting systems which are easier to automate in formal reasoning  \cite{bergstra2020arithmetical,bergstra2023axioms}. However, more recently, connections with nonstandard analysis \cite{Dinis_Bottazzi} and the study of common meadows from a purely algebraic point of view \cite{Dias_Dinis(23)}, opened new lines of research in the field. The latter paper also introduced a new class of meadows called \emph{pre-meadows with $\abf$} which we consider in this paper as well.

The present paper also views meadows from an algebraic point of view but focuses on finite meadows only. We study the problem of enumerating all finite common meadows of \emph{order} $n$ (that is, common meadows with $n$ elements). This problem is, of course, related with the similar problem of enumerating other finite algebraic structures, such as semigroups 
\cite{Forsythe,smallsemi, distler2012semigroups,Distler9}, groups \cite{Blackburn_Neumann_Venkataraman_2007} and rings \cite{Blackburn}. 

For finite involutive meadows, i.e.\ such that $0 \inv =0$, it is possible to obtain a unique representation of minimal finite meadows in terms of finite prime fields \cite{bethke}, which solves the problem of enumerating finite involutive meadows.
For pre-meadows with $\abf$ we show that there exists a characterization in terms of directed lattices of rings. The case of common meadows requires an extra condition that in many cases may prove difficult to check.

 The number of pre-meadows with $\abf$ and of common meadows of both even and odd order grows exponentially, but at different rates (see Figures \ref{Table} and \ref{fig:graph}). Of course, since these structures need to have at least 3 distinct elements, it only makes sense to start enumerating them for orders $n \geq 3$. As it turns out, the problem of enumerating finite pre-meadows with $\abf$ is deeply connected with both the number of finite rings of order $n \geq 3$ and with partitions of positive integers of a certain kind (which we shall call \emph{admissible partitions}).   

After recalling some preliminary notions and results in Section~\ref{S:Prelim} we start, in Section~\ref{S:Finite_meadows}, by showing that there are common meadows of any given order $n \geq 3$. We also give a lower bound on the number of common meadows (which also holds for pre-meadows with $\abf$) of a given order. Our main result states that a partition of a given positive integer is admissible if and only if it is possible to construct a pre-meadow with $\abf$  associated with that partition, i.e.\ each number in the partition is the order of one of the rings that constitute the pre-meadow with $\abf$. This result is enough to establish that, for the orders $3,4,6$ and $8$, there is exactly one common meadow. The result also allows to explicitly construct all pre-meadows with $\abf$ of a given order by reducing the problem to the problem of finding the admissible partitions and the rings of that order. In Section~\ref{S:5_and_7} we illustrate this construction for orders $5$ and $7$. These examples are somewhat typical. So much so that we are able to present an algorithm, in GAP 4, developed by the first author, which allows to obtain all pre-meadows with $\abf$ of a given order, provided that it is known both the number of finite rings and the number of lattices of lower orders. In Section~\ref{S:algorithm} we describe the algorithm and present some of its output. Some final remarks are left for Section~\ref{S:finalremarks}.

\section{Preliminaries}\label{S:Prelim}

In this section we present some definitions and results, mostly from \cite{Dias_Dinis(23)}, on which the results in this paper rely on. 
In \cite{Dias_Dinis(23)} two different classes of meadows have been studied: pre-meadows with $\abf$ and common meadows (introduced in \cite{Bergstra2015}). 

        A \emph{pre-meadow} is a structure $(P,+,-,\cdot)$ satisfying the following equations
    \begin{multicols}{2}
\begin{enumerate}
\item[$(P_1)$] $(x+y)+z=x+(y+z) $
\item[$(P_2)$] $x+y=y+x $ 
\item[$(P_3)$]  $x+0=x$ 
\item[$(P_4)$] $x+ (-x)=0 \cdot x$
\item[$(P_5)$] $(x \cdot y) \cdot z=x \cdot (y \cdot z)$ 
\item[$(P_6)$]  $x \cdot y=y \cdot x $
\item[$(P_7)$] $1 \cdot x=x$
\item[$(P_8)$] $x \cdot (y+z)= x \cdot y + x \cdot z$
\item[$(P_9)$] $-(-x)=x$
\item[$(P_{10})$] $0\cdot (x+y)=0\cdot x+0\cdot y$
\end{enumerate}
\end{multicols}

 Let $P$ be a pre-meadow and let $P_z:=\{x\in M\mid 0\cdot x=z\}$. We say that $P$ is a \emph{pre-meadow with $\abf$} if there exists a unique $z\in 0\cdot P$ such that $|P_z|=1$ (denoted by $\abf$) and $x+\abf=\abf$, for all $x\in P$. 
    A \emph{common meadow} is a pre-meadow with $\abf$ equipped with an inverse function $(\cdot)\inv$ satisfying
    \begin{multicols}{2}
\begin{enumerate}
\item[$(M_1)$] $x \cdot x^{-1}=1 + 0 \cdot x^{-1}$
\item[$(M_2)$]$(x \cdot y)^{-1} = x^{-1} \cdot y^{-1}$
\item[$(M_3)$] $(1 + 0 \cdot x)^{-1} = 1 + 0 \cdot x $
\item[$(M_4)$] $ 0^{-1}=\abf$
\end{enumerate}
\end{multicols}

    Examples \ref{E:partition_assoc_meadow} and \ref{E:smallestn} show that the classes of pre-meadows with $\abf$ and of common meadows are distinct but not disjoint.

  Let $P$ be a pre-meadow and $z,z'\in 0\cdot P$. We say that $z$ is \emph{less than or     equal to} $z'$, and write $z\leq z'$, if and only if $z\cdot z' = z$.

In \cite{Dias_Dinis(23)} it was shown that every pre-meadow with $\abf$ is associated with a particular type of lattice, a directed lattice of rings. We recall the definition and the result below.

    \begin{definition}\label{D:LatticeRings}
        A \emph{directed lattice} of rings $\Gamma$ over a countable lattice $L$ consists on a family of commutative rings $\Gamma_i$ indexed by $i\in L$, such that $\Gamma_i$ is a unital commutative ring for all $i\in L\setminus \min(L)$ and $\Gamma_{\min(L)}$ is the zero ring, together with a family of ring homomorphisms $f_{j,i}:\Gamma_i\rightarrow\Gamma_j$ whenever $i>j$ such that $f_{k,j}\circ f_{j,i}=f_{k,i} $ for all $i>j>k$. 
    \end{definition}

\begin{theorem}\label{T:Directedlat}   
Given a directed lattice of rings $\Gamma$ over the lattice $L$ there is an associated pre-meadow with $\abf$ defined by $M=\bigsqcup_{i\in L}\Gamma_i$. Additionally, $M$ is a common meadow if and only if for all $x\in\Gamma_i\subseteq M$ the set 

$$J_x=\{j\in I\mid f_{j,i}(x)\in \Gamma_j^{\times}\}$$
        has a unique maximal element.

Also, given  $M$  a pre-meadow with $\abf$, there is a directed lattice of rings over the lattice $0\cdot M$.
\end{theorem}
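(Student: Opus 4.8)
The plan is to prove the three assertions separately, exploiting throughout that the compatibility $f_{k,j}\circ f_{j,i}=f_{k,i}$ lets one evaluate any term in finitely many elements $x_\ell\in\Gamma_{i_\ell}$ by first mapping every argument down to the meet $m=i_1\wedge\cdots\wedge i_n$ and then computing inside the single commutative ring $\Gamma_m$. For the first assertion I would put, for $x\in\Gamma_i$ and $y\in\Gamma_j$ with $m=i\wedge j$, $x+y:=f_{m,i}(x)+f_{m,j}(y)$ and $x\cdot y:=f_{m,i}(x)\cdot f_{m,j}(y)$, let $-x$ be the additive inverse inside $\Gamma_i$, and take $0,1$ to be the zero and unit of the top ring $\Gamma_{\max(L)}$ (which exists since $L$ is bounded, as is needed for a global multiplicative unit). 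Each of $(P_1)$--$(P_{10})$ then reduces, after pushing all arguments to the relevant meet, to an identity already valid in a commutative ring: $(P_1),(P_5),(P_8)$ become associativity and distributivity in $\Gamma_{i\wedge j\wedge k}$, while $(P_3),(P_7)$ use that $f_{i,\max(L)}$ carries $0,1$ to $0_{\Gamma_i},1_{\Gamma_i}$. A direct computation gives $0\cdot x=0_{\Gamma_i}$ for $x\in\Gamma_i$, so $0\cdot M=\{0_{\Gamma_i}:i\in L\}$ is in bijection with $L$ and $P_{0_{\Gamma_i}}=\Gamma_i$; since the rings $\Gamma_i$ with $i\neq\min(L)$ are nonzero, the unique fibre of size one is $\Gamma_{\min(L)}$, whose element is $\abf$, and $x+\abf=\abf$ follows by pushing to $\min(L)$. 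Hence $M$ is a pre-meadow with $\abf$.

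For the common-meadow criterion I would first note that each $J_x$ is an order ideal: if $f_{k,i}(x)\in\Gamma_k^{\times}$ and $k'\le k$ then $f_{k',i}(x)=f_{k',k}(f_{k,i}(x))$ is again a unit. Thus $J_x$ has a unique maximal element exactly when it is principal, $J_x=\{k:k\le\mu(x)\}$. Assuming this for every $x$, define $x^{-1}:=\bigl(f_{\mu(x),i}(x)\bigr)^{-1}\in\Gamma_{\mu(x)}$. Then $(M_1)$ and $(M_3)$ are immediate from $f_{\mu(x),i}(x)\,x^{-1}=1_{\Gamma_{\mu(x)}}=1+0\cdot x^{-1}$, and $(M_4)$ holds because $0_{\Gamma_j}$ is a unit only in the zero ring, forcing $\mu(0)=\min(L)$ and $0^{-1}=\abf$. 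The substantive axiom is $(M_2)$: since in a commutative ring a product is a unit iff both factors are, $J_{xy}=J_x\cap J_y$, whence $\mu(xy)=\mu(x)\wedge\mu(y)$; pushing $x^{-1},y^{-1}$ down to this meet and using that homomorphisms preserve inverses of units gives $(xy)^{-1}=x^{-1}y^{-1}$. Conversely, if $M$ carries an inverse satisfying $(M_1)$--$(M_4)$, then $(M_1)$ forces $x^{-1}$ to lie in some $\Gamma_j$ with $j\in J_x$; for any $j'\in J_x$ the element $y:=x\cdot 1_{\Gamma_{j'}}=f_{j',i}(x)$ is a unit of $\Gamma_{j'}$, so writing $yy'=1_{\Gamma_{j'}}$ in that ring and applying $(M_2)$--$(M_3)$ gives $1_{\Gamma_{j'}}=y^{-1}(y')^{-1}$ with $y^{-1}=x^{-1}\cdot 1_{\Gamma_{j'}}\in\Gamma_{j\wedge j'}$. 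As the left-hand side lies in $\Gamma_{j'}$, comparing fibres forces $j'\le j$, so $j=\mu(x)$ is the unique maximum of $J_x$.

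For the converse I would take $L=0\cdot M$ ordered by $\le$ and, using the fibrewise ring structure of \cite{Dias_Dinis(23)}, set $\Gamma_z:=P_z$, a commutative ring with zero $z$, unital with $1\neq 0$ precisely when $z\neq\abf$, and equal to the zero ring when $z=\abf=\min(L)$. Here the meet is $z\wedge z'=z\cdot z'=z+z'$, so for $z'<z$ I would define $f_{z',z}\colon\Gamma_z\to\Gamma_{z'}$ by $f_{z',z}(x)=x+z'$; the identities $z'+z'=z'$ and $z''+z'=z''$ for $z''\le z'$ show that $f_{z',z}$ maps into $\Gamma_{z'}$, preserves sums and the unit, and satisfies $f_{z'',z'}\circ f_{z',z}=f_{z'',z}$, so $\Gamma$ is a directed lattice of rings over $L$. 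It then remains to check that the operations on $\bigsqcup_{z}\Gamma_z$ produced by the first part coincide with those of $M$, which comes down to the identity $(x+y)+\bigl(0\cdot(x+y)\bigr)=x+y$ and its multiplicative analogue, recovering $M$ on the nose.

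The step I expect to be the main obstacle is the common-meadow characterization, and within it the ``only if'' direction, where one must show that the single level at which the given inverse lives dominates every element of $J_x$. The decisive device is to test against the idempotents $1_{\Gamma_{j'}}=1+0\cdot w$, so that the unit $y=f_{j',i}(x)$ supplies the relation $1_{\Gamma_{j'}}=y^{-1}(y')^{-1}$ whose left-hand fibre $j'$ cannot exceed the fibre $j\wedge j'$ of the right-hand side; keeping the bookkeeping of meets straight in $(M_2)$, both here and in the ``if'' direction via $\mu(xy)=\mu(x)\wedge\mu(y)$, is where the real care is needed.
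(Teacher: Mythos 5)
Your proposal is correct and takes essentially the same route as the paper's own proof (carried out in \cite{Dias_Dinis(23)} and visible here in Lemma~\ref{P:Transitionmaps} and in the operations used in the proof of Theorem~\ref{T:Main}): operations on $\bigsqcup_i \Gamma_i$ defined by pushing arguments to the meet along the transition maps, the inverse of $x$ placed at the unique maximal level of the down-closed set $J_x$, and the converse obtained from the fibre rings $M_z$ with transition maps $f_{z,z'}(x)=x+z$. The only points to polish are minor: ``unique maximal element'' of the order ideal $J_x$ yields a genuine maximum only under finiteness (or a suitable chain condition), and in your only-if direction the fibre of $(y')^{-1}$ should be tracked explicitly, though the comparison $j'\le j\wedge j'$ goes through unchanged---both harmless in the finite setting the paper considers.
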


Note that, given a pre-meadow with $\abf$, with $0\cdot M$ finite and such that the partial order in $0\cdot M$ is a total order, we may conclude $M$ is a common meadow. 
Clearly, if $M$ is a finite pre-meadow with $\abf$ then both $0\cdot M$ and $M_{0\cdot z}$, for all $z\in M$, are finite sets.

Theorem~\ref{T:Directedlat} is constructive, as its proof shows explicitly how to construct a pre-meadow with $\abf$, once a directed lattice is given. Let us illustrate the process with an example.

\begin{exmp}\label{E:partition_assoc_meadow}
Consider the following lattice
\[\begin{tikzcd}
	& \z_6 \\
	& {} && {} \\
	\z_3 && \z_2 \\
	\\
	& {\{\abf\}}
	\arrow["\pi_1", from=1-2, to=3-1]
	\arrow["\pi_2"',  from=1-2, to=3-3]
	\arrow[from=3-3, to=5-2]
	\arrow[from=3-1, to=5-2]
\end{tikzcd}\]
where $\pi_1:\z_6\rightarrow \z_3$ and $\pi_2: \z_6 \rightarrow \z_2$ are the natural projection maps that send $1$ to $1$. Then, with the operations defined by the lattice, $M=\z_6\sqcup\z_3\sqcup\z_2\sqcup \{\abf\}$ is a pre-meadow with $\abf$ of order $12$. 
Additionally, simple calculations show that
\begin{multicols}{2}
    \begin{itemize}
    \item $J_{[0]_6}=\{\abf\}$
    \item $J_{[1]_6}=\{[0]_6,\abf\}$
    \item $J_{[2]_6}=\{[0]_3,\abf\}$
    \item $J_{[3]_6}=\{[0]_2,\abf\}$
    \item $J_{[4]_6}=\{[0]_3,\abf\}$
    \item $J_{[5]_6}=\{[0]_6,\abf\}$
\end{itemize}
\end{multicols}
and so $M$ is in fact a common meadow.

\end{exmp}

\section{Finite meadows}\label{S:Finite_meadows}


We start by showing that there exist common meadows of every order greater than or equal to $3$.

\begin{proposition}\label{P:existence}
    Let $n\geq 2$. Then there exists a common meadow of order $n+1$.
\end{proposition}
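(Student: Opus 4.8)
The plan is to invoke Theorem~\ref{T:Directedlat} with the smallest index lattice that still carries a nontrivial ring on top. Fix $n\geq 2$ and take the two-element chain $L=\{0,1\}$ with $0<1$ as the indexing lattice. As required of $\min(L)$, place the zero ring $\Gamma_0=\{\abf\}$ at the bottom, and place a unital commutative ring of order $n$ at the top. The uniform choice is $\Gamma_1=\zn$, which is a unital commutative ring of order $n$ for every $n\geq 2$. The only homomorphism to specify is the unique map $f_{0,1}\colon \zn\rightarrow\{\abf\}$ to the zero ring; since $L$ contains no three-element chain, the compatibility condition $f_{k,j}\circ f_{j,i}=f_{k,i}$ is vacuous. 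Thus this data constitutes a directed lattice of rings in the sense of Definition~\ref{D:LatticeRings}.

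By Theorem~\ref{T:Directedlat}, the associated structure $M=\zn\sqcup\{\abf\}$ is a pre-meadow with $\abf$. Its order is $|\zn|+1=n+1$, which is the quantity we want. The single element contributed by the bottom of the lattice is exactly the error term $\abf$, so the count is $n+1$ and not $n$; this is the one bookkeeping point I would take care to state explicitly.

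It then remains to upgrade ``pre-meadow with $\abf$'' to ``common meadow''. For this I would appeal to the observation recorded immediately after Theorem~\ref{T:Directedlat}: since $0\cdot M$ is order-isomorphic to $L$, which is the two-element chain and hence finite and totally ordered, $M$ is automatically a common meadow. Alternatively, one verifies the criterion of Theorem~\ref{T:Directedlat} directly, noting that each $J_x$ is a subset of the totally ordered $L$ and therefore trivially has a unique maximal element.

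I do not expect a serious obstacle here: the substance of the proposition is merely the existence of a unital commutative ring of each order $n\geq 2$, for which $\zn$ serves as a uniform witness, and the common-meadow condition comes for free because the underlying lattice is a chain. The only subtlety worth flagging is ensuring that the minimum of the lattice is the zero ring, so that the extra element is precisely $\abf$ and the total order of $M$ is $n+1$.
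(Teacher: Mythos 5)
Your proof is correct and is essentially identical to the paper's: both build the two-element chain with $\z_n$ on top and $\{\abf\}$ at the minimum, invoke Theorem~\ref{T:Directedlat} to obtain a pre-meadow with $\abf$ of order $n+1$, and conclude it is a common meadow because the underlying order is total. The extra details you flag (the vacuous compatibility condition, the $+1$ bookkeeping) are fine but not substantive differences.
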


\begin{proof}
    Since $\z_n$ is a commutative ring, for $n \geq 2$ we can construct the pre-meadow with $\abf$
    \begin{center}
    \begin{tikzcd}
	{\z_n}\\
	{\{\abf\}}
        \arrow[from=1-1, to=2-1]
    \end{tikzcd}
    \end{center}
which clearly has $n+1$ elements. Since the order is total, it is in fact a common meadow.
\end{proof}

By Theorem \ref{T:Directedlat}, given  $M$, a pre-meadow with $\abf$, there exists a lattice associated with $M$, denoted by $0\cdot M$. The following proposition shows that there are infinitely many common meadows associated with each lattice.

\begin{proposition}\label{P:Lmn}
    Let $L$ be a lattice with $n+1$ vertices, where $n>0$, then for each natural number $k$ greater than or equal to $2$ there is a common meadow $M$ with $kn+1$ elements such that the lattice associated with $M$ is isomorphic to $L$.
\end{proposition}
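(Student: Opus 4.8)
The plan is to realise $L$ by placing the same ring $\z_k$ on every vertex except the bottom and connecting these copies by identity maps. Since $L$ is a finite lattice it has a least element $\bot=\min(L)$; I would set $\Gamma_{\bot}=\{\abf\}$ (the zero ring) and $\Gamma_i=\z_k$ for every $i\in L\setminus\{\bot\}$, which is a unital commutative ring precisely because $k\geq 2$. For the transition maps I take $f_{j,i}=\Id_{\z_k}$ whenever $i>j$ with $i,j\neq\bot$, and $f_{\bot,i}\colon\z_k\to\{\abf\}$ the unique homomorphism into the zero ring. The cocycle condition $f_{k,j}\circ f_{j,i}=f_{k,i}$ is then immediate, since a composite of identities is the identity and any composite ending at $\bot$ is the unique map into the zero ring; in particular this works for any shape of $L$ (e.g.\ diamonds), because identities compose consistently along every path. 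By Theorem~\ref{T:Directedlat} this directed lattice of rings yields a pre-meadow with $\abf$, $M=\bigsqcup_{i\in L}\Gamma_i$, whose cardinality is $1+nk=kn+1$: the vertex $\bot$ contributes the single element $\abf$, and each of the remaining $n$ vertices contributes the $k$ elements of $\z_k$.

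Next I would check that the lattice associated with $M$ is isomorphic to $L$. The elements of $0\cdot M$ are exactly the zeros $0_{\Gamma_i}$, one for each $i\in L$, so $i\mapsto 0_{\Gamma_i}$ is a bijection from $L$ to $0\cdot M$. Because products across different rings are computed by transporting into the meet, $0_{\Gamma_i}\cdot 0_{\Gamma_j}=f_{i\wedge j,i}(0_{\Gamma_i})\cdot f_{i\wedge j,j}(0_{\Gamma_j})=0_{\Gamma_{i\wedge j}}$, so $0_{\Gamma_i}\leq 0_{\Gamma_j}$, i.e.\ $0_{\Gamma_i}\cdot 0_{\Gamma_j}=0_{\Gamma_i}$, holds exactly when $i\wedge j=i$, that is, when $i\leq j$. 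Hence $i\mapsto 0_{\Gamma_i}$ is an order isomorphism and therefore a lattice isomorphism $L\cong 0\cdot M$.

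The main obstacle is to confirm that $M$ is actually a common meadow and not merely a pre-meadow with $\abf$, since $L$ need not be totally ordered and so the sufficient condition noted after Theorem~\ref{T:Directedlat} does not apply. Here the choice of identity maps is exactly what makes the verification go through. Fix $x\in\Gamma_i$. If $x$ is a unit of $\z_k$, then $f_{j,i}(x)=x$ is a unit of $\Gamma_j=\z_k$ for every non-bottom $j\leq i$, while $f_{\bot,i}(x)=\abf$ is the (trivially invertible) element of the zero ring; hence $J_x=\{\,j\in L: j\leq i\,\}$, the principal down-set of $i$, whose unique maximal element is $i$. If instead $x$ is a non-unit of $\z_k$, then $f_{j,i}(x)=x$ is a non-unit for every non-bottom $j$, so $J_x=\{\bot\}$, again with a unique maximal element. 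In either case $J_x$ has a unique maximal element, so by Theorem~\ref{T:Directedlat} $M$ is a common meadow; together with the cardinality count and the lattice identification above, this gives a common meadow of order $kn+1$ whose associated lattice is isomorphic to $L$, completing the argument.
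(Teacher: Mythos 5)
Your proof is correct and takes essentially the same approach as the paper: the paper's proof likewise replaces the minimum of $L$ by $\{\abf\}$, labels every other vertex with $\z_k$, connects them by identity maps, and observes that $J_x$ has a unique maximal element (the vertex carrying $x$ when $x$ is a unit of $\z_k$, and $\abf$ otherwise). Your extra verifications --- the cocycle condition, the cardinality count, and the order isomorphism $L\cong 0\cdot M$ --- are details the paper leaves implicit, so they strengthen rather than change the argument.
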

\begin{proof}
    Given a lattice with $n+1$ vertices, where $n>0$, replace its minimum by $\{\abf\}$, all other vertices by the ring $\z_k$ and connect these rings by the identity function. By Theorem~\ref{T:Directedlat}, the resulting structure is a pre-meadow with $\abf$ which clearly has $kn+1$ elements. Since the transition maps are the identity, one easily sees that $J_x$ either has $x$ as the unique maximal element, or $J_x=\{\abf\}$, in which case the maximal element is also unique. Hence $M$ is a common meadow.
\end{proof}

\begin{exmp}\label{E:lattice8}
Consider the following lattice with $8$ elements
    \[\begin{tikzcd}
	& \bullet \\
	\bullet & \bullet & \bullet \\
	\bullet & \bullet & \bullet \\
	& \bullet
	\arrow[from=1-2, to=2-1]
	\arrow[from=1-2, to=2-3]
	\arrow[from=2-1, to=3-2]
	\arrow[from=2-3, to=3-2]
	\arrow[from=1-2, to=2-2]
	\arrow[from=2-2, to=3-1]
	\arrow[from=2-2, to=3-3]
	\arrow[from=3-3, to=4-2]
	\arrow[from=3-1, to=4-2]
	\arrow[from=3-2, to=4-2]
	\arrow[from=2-3, to=3-3]
	\arrow[from=2-1, to=3-1]
\end{tikzcd}\]

We can construct, for example, a common meadow with $9 \times 7 + 1=64$ elements as follows where, as in the proof of Proposition \ref{P:Lmn}, the arrows between the cyclic groups are the identity
\[\begin{tikzcd}
	& {\z_9} \\
	{\z_9} & {\z_9} & {\z_9} \\
	{\z_9} & {\z_9} & {\z_9} \\
	& {\{\abf\}} \\
	\arrow[from=1-2, to=2-1]
	\arrow[from=1-2, to=2-3]
	\arrow[from=2-1, to=3-2]
	\arrow[from=2-3, to=3-2]
	\arrow[from=1-2, to=2-2]
	\arrow[from=2-2, to=3-1]
	\arrow[from=2-2, to=3-3]
	\arrow[from=3-3, to=4-2]
	\arrow[from=3-1, to=4-2]
	\arrow[from=3-2, to=4-2]
	\arrow[from=2-3, to=3-3]
	\arrow[from=2-1, to=3-1]
\end{tikzcd}\]
\end{exmp}

By Proposition~\ref{P:Lmn} we obtain a (non-optimal) lower bound on the number of common meadows of odd order in terms of the number of lattices with $n$ elements, which we denote by $L(n)$.

\begin{proposition}\label{P:bound_odd}
    Let $n$ be a natural number greater than or equal  to $1$. Then there exist at least $L(n+1)$ common meadows with $2n+1$ elements.
\end{proposition}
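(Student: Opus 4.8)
The plan is to feed the enumerated lattices into the construction of Proposition~\ref{P:Lmn} and then argue that distinct lattices yield non-isomorphic common meadows. Concretely, I would apply Proposition~\ref{P:Lmn} with $k=2$: for each lattice $L$ with $n+1$ vertices it produces a common meadow $M_L$ of order $2\cdot n+1=2n+1$ whose associated lattice $0\cdot M_L$ is isomorphic to $L$. Since, by definition, there are $L(n+1)$ pairwise non-isomorphic lattices on $n+1$ vertices, this yields $L(n+1)$ common meadows of the required order, and the proof reduces to showing that they are pairwise non-isomorphic.

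The heart of the argument is that the associated lattice is an invariant of a common meadow up to isomorphism. I would verify this as follows. Let $\phi\colon M\to M'$ be an isomorphism of common meadows. Then $\phi$ preserves $0$ (the additive identity) and multiplication, so $\phi(0\cdot x)=0\cdot\phi(x)$ for all $x$, whence $\phi$ restricts to a bijection $0\cdot M\to 0\cdot M'$. Recalling that the order on $0\cdot M$ is defined intrinsically by $z\leq z'\iff z\cdot z'=z$, multiplicativity of $\phi$ gives $z\leq z'\iff\phi(z)\leq\phi(z')$, so this restriction is in fact a lattice isomorphism $0\cdot M\cong 0\cdot M'$.

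Granting this, the conclusion is immediate: if $M_L\cong M_{L'}$ as common meadows, then $L\cong 0\cdot M_L\cong 0\cdot M_{L'}\cong L'$, so non-isomorphic lattices produce non-isomorphic meadows. The $L(n+1)$ meadows $M_L$ are therefore pairwise distinct, establishing the lower bound. The only delicate point---and it is a mild one---is the invariance claim of the second paragraph; everything else is a direct invocation of Proposition~\ref{P:Lmn} together with the bookkeeping $k\cdot n+1=2n+1$.
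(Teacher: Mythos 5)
Your proposal is correct and follows exactly the route the paper intends: Proposition~\ref{P:Lmn} with $k=2$ applied to the $L(n+1)$ lattices on $n+1$ vertices, which is why the paper states Proposition~\ref{P:bound_odd} without a separate proof. Your additional verification that the lattice $0\cdot M$, with its intrinsic order $z\leq z'\iff z\cdot z'=z$, is an isomorphism invariant of the meadow is exactly the step the paper leaves implicit, and you carry it out correctly.
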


As for the the number of common meadows of even order we can also obtain a (non-optimal) lower bound (see Figure~\ref{fig:lowerbound} for a list of these bounds for $n=11$ to $30$).

\begin{proposition}\label{P:bound_even}
     Let $n$ be a natural number greater than or equal  to $6$. Then there exist at least $L(n-5)$ common meadows with $2n$ elements.
\end{proposition}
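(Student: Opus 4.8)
The plan is to mirror the proof of Proposition~\ref{P:bound_odd}, where each lattice with a prescribed number of vertices was turned, via Proposition~\ref{P:Lmn}, into a common meadow by placing $\z_2$ on every non-minimal vertex. The obstruction in the even case is purely one of parity: a pre-meadow built only from copies of $\z_2$ and the singleton $\{\abf\}$ always has an odd number of elements, since its size is $2v+1$ for $v$ the number of $\z_2$-blocks. To reach an even order $2n$ I would therefore attach to the variable lattice a fixed gadget that both corrects the parity and contributes a constant number of elements. Concretely, to each lattice $L$ with $n-5$ vertices I associate the directed lattice of rings $\Gamma_L$ obtained by (i) placing $\z_2$ on each of the $n-5$ vertices of $L$, with identity transition maps inside $L$ exactly as in Proposition~\ref{P:Lmn}; (ii) adjoining a global bottom $\{\abf\}$ below $\min(L)$; (iii) adjoining a single extra atom carrying $\z_3$, sitting above $\{\abf\}$ and incomparable to every $\z_2$; and (iv) adjoining a global top carrying $\z_6$, above all of $L$ and above the $\z_3$-atom, with transition maps $\z_6\to\z_2$ and $\z_6\to\z_3$ given by reduction and $\z_3\to\{\abf\}$, $\z_2\to\{\abf\}$ the unique maps onto the zero ring.

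First I would check that the underlying poset is a lattice: the only new incomparabilities are between the $\z_3$-atom and the $\z_2$-part, and any such pair has join $\z_6$ and meet $\{\abf\}$, so all joins and meets exist. The cocycle identity $f_{k,j}\circ f_{j,i}=f_{k,i}$ is then routine, since every transition map is either an identity, a reduction between cyclic rings of divisible orders, or the map onto the zero ring. A direct count gives $2(n-5)+6+3+1=2n$ elements, which is where the shift $n-5$ and the hypothesis $n\geq 6$, needed for $L$ to be nonempty, enter. Next I would invoke Theorem~\ref{T:Directedlat} and verify the common-meadow condition by computing $J_x$. The cases inside the $\z_2$-part and inside $\z_3$ are immediate, each $J_x$ having the obvious unique maximum. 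The decisive case is $x\in\z_6$: a unit of $\z_6$ gives $J_x$ with maximum $\z_6$; the element $0$ gives $J_x=\{\abf\}$; the even non-units $2,4$ reduce to units only in $\z_3$, giving maximum $\z_3$; and the odd non-unit $3$ reduces to a unit only in the $\z_2$-part, so its $J_x$ is the $\z_2$-part together with $\abf$, whose maximum is the unique top of $L$. Thus every $J_x$ has a unique maximal element and $\Gamma_L$ yields a common meadow $M_L$ of order $2n$.

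Finally I would prove injectivity on isomorphism classes. A meadow isomorphism preserves both the sizes of the constituent rings and the associated lattice order. In $M_L$ the only ring of size $6$ is the top $\z_6$ and the only ring of size $3$ is the adjoined $\z_3$, all remaining nonzero blocks being copies of $\z_2$; hence any isomorphism $M_L\cong M_{L'}$ must fix the gadget and restrict to an order isomorphism between the $\z_2$-parts, which are precisely $L$ and $L'$. Therefore $L\cong L'$, the assignment $L\mapsto M_L$ is injective on isomorphism classes, and since there are $L(n-5)$ lattices with $n-5$ vertices we obtain at least $L(n-5)$ pairwise non-isomorphic common meadows of order $2n$.

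The main obstacle is step (iv) together with the common-meadow verification: parity forces at least one odd-order block, but there is no unital ring homomorphism between $\z_2$ and $\z_3$ in either direction, so the odd block cannot be comparable to the $\z_2$-part. Placing it incomparably is possible inside a lattice only if its join with the $\z_2$-part carries a ring admitting homomorphisms onto both orders, which is exactly why $\z_6$ must be installed as the top. The heart of the argument is then confirming that, for this mixed structure, the set $J_x$ of each element of $\z_6$ still has a unique maximal element; this is where finiteness of $L$, hence the existence of a unique maximum of the $\z_2$-part, is essential.
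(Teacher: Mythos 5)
Your proof is correct and takes essentially the same route as the paper: both realize $2n = 6 + 3 + 2(n-5) + 1$ by filling a lattice $L$ with $n-5$ vertices with copies of $\z_2$ (identity transition maps), then adjoining a $\z_6$ top, a parallel $\z_3$ atom, and an $\{\abf\}$ bottom, with the projections $\z_6 \to \z_3$ and $\z_6 \to \z_2$, and checking that every $J_x$ has a unique maximal element. Your explicit verification of the lattice axioms, the case analysis for $x \in \z_6$, and the injectivity of $L \mapsto M_L$ on isomorphism classes simply fill in details the paper leaves implicit.
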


\begin{proof}
    We have that $2n =6+3+2(n-5)+1$. Then, given a lattice $L$ with more than $5$ vertices, we can create a new lattice by adding three extra vertices $v_1,v_2,v_3$; connecting $v_1$ to the maximum of $L$, and becoming the new maximum; connecting $v_3$ to the minimum of $L$, and becoming the new minimum; and by creating a new edge between $v_1$ and $v_2$, and a new edge between $v_2$ and $v_3$, as illustrated below. 

    \[\begin{tikzcd}
	& {v_1} \\
	{v_2} && {\boxed{L}} \\
	& {v_3}
	\arrow[from=1-2, to=2-1]
	\arrow[from=2-1, to=3-2]
	\arrow[from=1-2, to=2-3]
	\arrow[from=2-3, to=3-2]
\end{tikzcd}\]

We can now construct a directed lattice by replacing $v_1$ with $\z_6$, $v_2$ with $\z_3$, $v_3$ with $\{\abf\}$ and all the vertices of $L$ with $\z_2$. The maps from $\z_6$ (which is, of course, isomorphic with $\z_3 \times \z_2$) to $\z_3$ and to $\z_2$ are the projection maps.

Similarly to Example \ref{E:partition_assoc_meadow}, for all $x\in \z_6$ we have that $J_x$ has a unique maximal element, and so it is in fact a common meadow.
\end{proof}

Since common meadows are pre-meadows with $\abf$ the bounds given in Propositions \ref{P:bound_odd} and \ref{P:bound_even} are trivially also bounds for pre-meadows with $\abf$.

\begin{definition}
   Given $M$, a pre-meadow with $\abf$,  with $n$ elements we say that a partition $a_0+\cdots+a_s+1$ of $n$ is \emph{associated} with $M$ if there exist $x_i\in 0\cdot M\setminus\{\abf\}$ such that for all $i\in\{1,\cdots,s\}$ we have $a_i=|M_{x_i}|$, and $M_{x_i}\neq M_{x_j}$ for $i\neq j$.
\end{definition}

As an example, observe that the partition $12=6+3+2+1$ is associated with the common meadow in Example \ref{E:partition_assoc_meadow}, and the partition $64=9+9+9+9+9+9+9+1$ is associated with the common meadow in Example \ref{E:lattice8}.

\begin{definition}
    Let $a_0+\cdots+a_s+1$ be a partition of a natural number $n$ greater than or equal to $3$, such that $a_i>1$ for $i\in\{0,\cdots,s\} $. We say that the partition is \emph{admissible} if there is some $k\in\{0,\cdots,s\}$ such that all prime divisors of $a_i$, for $i\in\{1,\cdots,s\}$,  are also prime divisors of $a_k$.
\end{definition}
Note that in admissible partitions the number $1$ occurs exactly once.
\begin{exmp}\label{E:5}
    There are $7$ partitions of $5$:
     \begin{multicols}{3}
    \begin{itemize}
    \item $5$
    \item $4+1$
    \item $3+2$
    \item $3+1+1$
    \item $2+2+1$
    \item $2 + 1 + 1 + 1$
    \item $1 + 1 + 1 + 1 + 1$
  \end{itemize}
     \end{multicols}
     Out of these, only the partitions $4+1$ and $2+2+1$ are admissible.
\end{exmp}

We now show that a partition is admissible if and only if there is a pre-meadow with $\abf$ associated with that partition. The proof requires the following lemma concerning the existence of ring homomorphisms. 

We denote by $\homr(S,T)$ the set of all unital ring homomorphisms from $R$ to $S$.

\begin{lemma}\label{L:RingMorphisms}
    Let $m=p_1^{\alpha_1}\cdots p_s^{\alpha_s}$ and $n=p_1^{\beta_1}\cdots p_t^{\beta_t}$ be positive natural numbers, wiith $p_i$ prime numbers for $i\in \{1,\cdots,\max\{s,t\}\}$. Then there exist unital rings $S,T$ of orders $m$ and $n$, repectively, such that $\homr(S,T)\neq \emptyset$ if and only if $t\leq s$.
\end{lemma}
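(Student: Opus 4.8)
The plan is to govern the existence of a unital ring homomorphism by the \emph{characteristics} of the rings, and to exploit the elementary fact that, for a finite unital ring, the characteristic is divisible by exactly the same primes as the order. The first step I would isolate is this auxiliary claim: \emph{for a finite unital ring $R$ of order $r$, a prime $p$ divides $r$ if and only if $p\mid\mathrm{char}(R)$.} Granting it, the indices $s$ and $t$ become visible inside the rings: since the factorizations of $m$ and $n$ use a common enumeration of primes, the primes dividing $m$ are exactly $\{p_1,\dots,p_s\}$ and those dividing $n$ are exactly $\{p_1,\dots,p_t\}$, so an inclusion of prime sets is the same thing as the inequality $t\le s$.

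I would prove the auxiliary claim through the exponent of the additive group $(R,+)$. Writing $c=\mathrm{char}(R)$ for the additive order of $1_R$, one has $c\cdot x=(c\cdot 1_R)\,x=0$ for every $x\in R$, so the exponent of $(R,+)$ divides $c$; since $1_R$ itself has additive order $c$, the exponent equals $c$. As the primes dividing the order of a finite abelian group are precisely those dividing its exponent, the primes dividing $|R|$ are exactly those dividing $\mathrm{char}(R)$, as claimed. The \emph{only if} direction of the lemma is then immediate: if $S,T$ have orders $m,n$ and $f\colon S\to T$ is a unital homomorphism, then $f(1_S)=1_T$ and additivity give $\mathrm{char}(S)\cdot 1_T=f(\mathrm{char}(S)\cdot 1_S)=0$, so $\mathrm{char}(T)\mid\mathrm{char}(S)$; hence every prime dividing $n$ divides $m$, which by the previous paragraph forces $t\le s$.

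For the \emph{if} direction I would use the freedom to \emph{choose} $S$ and $T$, and this is where the main subtlety lies. The naive candidates $S=\z_m$ and $T=\z_n$ fail, because a unital map $\z_m\to\z_n$ exists only when $n\mid m$, whereas here $n$ need not divide $m$ (for instance $m=6$, $n=4$ with $t=1\le 2=s$). The remedy is to push all the relevant characteristics down to single primes. For a prime $p$ and $k\ge 1$ let $R_p^{(k)}=\z_p\times\cdots\times\z_p$ ($k$ factors), a ring of order $p^k$ and characteristic $p$; crucially, for \emph{any} $a,b\ge 1$ there is a unital homomorphism $R_p^{(a)}\to R_p^{(b)}$, namely projection onto a single coordinate followed by the diagonal embedding. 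Assuming $t\le s$, I would then set $S=\prod_{i=1}^{s}R_{p_i}^{(\alpha_i)}$ and $T=\prod_{i=1}^{t}R_{p_i}^{(\beta_i)}$, which have orders $m$ and $n$, and define $f\colon S\to T$ coordinatewise on the first $t$ factors, each component being such a map $R_{p_i}^{(\alpha_i)}\to R_{p_i}^{(\beta_i)}$ precomposed with the projection $S\to R_{p_i}^{(\alpha_i)}$.

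The remaining verifications are routine: that the coordinate maps and their bundling are genuine unital ring homomorphisms, that $1_S\mapsto 1_T$, and that the orders multiply out to $m$ and $n$ (the degenerate cases $m=1$ or $n=1$ reduce to the zero ring and are handled directly). The genuinely delicate steps are the two I have flagged: the auxiliary claim linking the primes of the order to the primes of the characteristic, which powers the \emph{only if} direction, and the realization that the \emph{if} direction must abandon cyclic rings in favour of characteristic-$p$ rings, so that the needed characteristic divisibility holds automatically regardless of how $\alpha_i$ and $\beta_i$ compare.
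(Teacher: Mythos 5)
Your proof is correct, and it splits against the paper in an interesting way. For the sufficiency direction you use exactly the paper's construction: $S=\z_{p_1}^{\alpha_1}\times\cdots\times\z_{p_s}^{\alpha_s}$ and $T=\z_{p_1}^{\beta_1}\times\cdots\times\z_{p_t}^{\beta_t}$, with each component map being projection onto a single coordinate followed by the diagonal embedding $1\mapsto(1,\ldots,1)$, precomposed with the projection onto the first $t$ factors — this is verbatim the paper's $\psi_{p_i}=i_{p_i}\circ\pi_{p_i}$ composed with $\pi$, including your (correct) observation that cyclic rings $\z_m\to\z_n$ cannot work and must be replaced by characteristic-$p$ products. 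For necessity the routes genuinely differ. The paper passes to $\Ima(f)$, reduces to the case where $S$ is a unital subring of $T$ (a step that is loosely worded there, since $\Ima(f)$ has order only \emph{dividing} $m$), decomposes $T=T_{p_1}\times\cdots\times T_{p_t}$ into its maximal subrings of prime-power order, and projects onto $T_{p_t}$: since $f(1_S)=1_T$ and $m\cdot 1=0$, the identity of $T_{p_t}$ is annihilated by $m$, forcing $p_t\mid m$ and hence $t\leq s$. You instead prove the self-contained auxiliary fact that for a finite unital ring the primes dividing the order are exactly the primes dividing the characteristic (the exponent of $(R,+)$ equals the additive order of $1_R$, plus Cauchy's theorem), and then read off $\mathrm{char}(T)\mid\mathrm{char}(S)$ from $f(1_S)=1_T$. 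Both arguments pivot on the same mechanism — transporting $m\cdot 1_S=0$ across the unital homomorphism — but yours avoids the image/subring reduction entirely and needs no primary decomposition of $T$, at the cost of invoking Cauchy's theorem; the paper's version gets by with Lagrange plus the decomposition of a finite ring into $p$-power components, projecting onto just one of them. Your handling of the degenerate zero-ring cases and of the common enumeration of the $p_i$ (so that inclusion of prime sets is literally $t\le s$) is also sound.
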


\begin{proof}
    Assume first that $t\leq s$. Let $\z_p^n$ be the Cartesian product of $n$ copies of $\z_p$. We define $S:=\z_{p_1}^{\alpha_1}\times\cdots \times\z_{p_s}^{\alpha_s}$ and $T:=\z_{p_1}^{\beta_1}\times\cdots \times\z_{p_t}^{\beta_t}$. For each $i\in \{1,\cdots,t\}$, consider the ring homomorphism $\psi_{p_i}:\z_{p_i}^{\alpha_i}\rightarrow\z_{p_i}^{\beta_i}$ defined by $\psi_{p_i}= i_{p_i}\circ \, \pi_{p_i}$, where $\pi_{p_i}:\z_{p_i}^{\alpha_i}\rightarrow\z_{p_i}$ is the projection homomorphism onto the first coordinate, and $i_{p_i}:\z_{p_i}\rightarrow \z_{p_i}^{\beta_i}$  is the inclusion homomorphism $i_{p_i}(1)=(1,\cdots,1)$. Consider now the projection homomorphism onto the first $t$ coordinates
    $$\pi: S 
    \rightarrow \z_{p_1}^{\alpha_1}\times\cdots \times\z_{p_t}^{\alpha_t}.$$
    
    Then the composition $(\psi_{p_1}\times \cdots \times \psi_{p_t}) \circ \, \pi$, for $i\in \{1,\cdots,t\}$ is easily shown to be a ring homomorphism from $S$ to $T$.

    Assume now  that there exists a ring homomorphism $f:S\rightarrow T$ between the unital rings $S$ and $T$ of orders $m$ and $n$, respectively. Note that, from the first isomorphism theorem, the order of $\Ima(f)$ divides the order of $S$, and so $\Ima(f)$ is a ring of order $m'=p_1^{\alpha'_1}\cdots p_s^{\alpha'_s}$. Now, if all prime divisors of the order of $T$ are also prime divisors of the order of $\Ima(f)$, then they must also divide the order of $S$. Hence, we may assume that $S$ is a unital subring of $T$. But then the unit of $S$ is the same as the unit of $T$, which we denote by $1$. From the fact that $S$ is a ring of order $m=p_1^{\alpha_1}\cdots p_s^{\alpha_s}$ we have that $m\cdot 1 = 0$. Now let $T=T_{p_1}\times \cdots \times T_{p_t}$ be the decomposition of $T$ into maximal subrings of order a power of $p$, and consider the projection homomorphism onto the last coordinate $\pi_t:S\rightarrow S_{p_t}$. Since $\pi_t(1)$ is the identity of $S_{p_t}$ and $m\cdot 1 = 0$, we have that $m\cdot \pi_t(1) = 0$, and so $p_t$ must divide $m$, which implies that $t\leq s$.
\end{proof}

We will also require the following result from \cite{Dias_Dinis(23)}.

\begin{lemma}\label{P:Transitionmaps}
            Let $M$ be a pre-meadow with $\abf$. If $z,z'\in 0\cdot M$ are such that $z\leq z'$, then the map  $f_{z,z'}:M_{z'} \rightarrow M_z$ defined by $f_{z,z'}(x)=x+z$ is a ring homomorphism. Moreover, if $z,z',z''\in 0\cdot M$ are such that $z\leq z'\leq z''$, then $f_{z,z'}\circ f_{z',z''}=f_{z,z''}$. 
\end{lemma}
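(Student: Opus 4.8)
The plan is to regard $f_{z,z'}$ as a map between the two commutative rings $M_{z'}$ and $M_z$ coming from the decomposition of a pre-meadow with $\abf$ into rings established in \cite{Dias_Dinis(23)} (the components of the directed lattice of Theorem~\ref{T:Directedlat}). Each component $M_w$ is a commutative ring whose additive identity is $w$ itself: for $u\in M_w$ one has $u+w = u+0\cdot u = u\cdot 1 + u\cdot 0 = u\cdot(1+0) = u$ by $(P_8)$, $(P_3)$, $(P_7)$; its additive inverses are given by the pre-meadow negation (by $(P_4)$, $u+(-u)=0\cdot u = w$), and its multiplicative identity is $1+w$. Granting this, I would verify in turn that $f_{z,z'}$ (i) lands in $M_z$, (ii) preserves addition, (iii) preserves multiplication, (iv) sends unit to unit, and finally (v) satisfies the composition identity. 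Every one of these reductions rests on a single fact, namely that comparable zeros add to the smaller one: if $z,z'\in 0\cdot M$ and $z\le z'$, then $z+z'=z$. I expect this identity to be the main obstacle, since it is exactly the point at which the multiplicatively defined order $z\le z'\iff zz'=z$ must be converted into additive information.

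To prove it I would first record three elementary consequences of the axioms. From $(P_6)$ and $(P_7)$, $0\cdot 1 = 1\cdot 0 = 0$. Using $1+0=1$ and $(P_{10})$ then gives $0 = 0\cdot 1 = 0\cdot(1+0) = 0\cdot 1 + 0\cdot 0 = 0 + 0\cdot 0$, whence $0\cdot 0 = 0$; consequently, for any $z = 0\cdot c \in 0\cdot M$ we get $0\cdot z = (0\cdot 0)\cdot c = 0\cdot c = z$ by $(P_5)$. Now write $z' = 0\cdot b$. Since $0\cdot z = z$, associativity and commutativity give $zz' = z\cdot(0\cdot b) = (0\cdot z)\cdot b = z\cdot b$, so the hypothesis $zz'=z$ says precisely $z\cdot b = z$. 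Substituting $z = z\cdot b$ and $z' = 0\cdot b$ and using distributivity together with $(P_3)$, I obtain $z+z' = z\cdot b + 0\cdot b = (z+0)\cdot b = z\cdot b = z$, as required. The companion relation $z+z=z$ is equally immediate: $z+z = 0\cdot c + 0\cdot c = (0+0)\cdot c = z$; and $z\cdot z = z$ because $z$ is the zero of the ring $M_z$.

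With the identity $z+z'=z$ in hand the remaining verifications are short. For well-definedness, $(P_{10})$ gives $0\cdot(x+z) = 0\cdot x + 0\cdot z = z'+z = z$ for $x\in M_{z'}$, so $x+z\in M_z$. Additivity follows from $z+z=z$, since $(x+z)+(y+z) = (x+y)+z = f_{z,z'}(x+y)$. For multiplicativity I would expand $(x+z)(y+z) = xy + xz + zy + zz$ and observe that each cross term collapses to $z$: writing $z=0\cdot c$, we have $xz = x\cdot(0\cdot c)\in 0\cdot M$ and $0\cdot(xz) = (0\cdot x)z = z'z = z$; but any $w\in 0\cdot M$ satisfies $0\cdot w = w$, so the only element of $0\cdot M$ in $M_z$ is $z$, forcing $xz=z$ (and similarly $zy=z$, while $zz=z$). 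Hence $(x+z)(y+z) = xy+z = f_{z,z'}(xy)$. The unit $1+z'$ of $M_{z'}$ is sent to $(1+z')+z = 1+(z'+z) = 1+z$, the unit of $M_z$. Finally, for $z\le z'\le z''$ and $x\in M_{z''}$ the composition identity is immediate from $z'+z=z$: $f_{z,z'}\bigl(f_{z',z''}(x)\bigr) = (x+z')+z = x+(z'+z) = x+z = f_{z,z''}(x)$.
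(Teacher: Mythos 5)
Your proposal is correct, but note that this paper never proves the lemma at all: it is imported verbatim from \cite{Dias_Dinis(23)} (``We will also require the following result from \cite{Dias_Dinis(23)}''), so the only comparison available is with that cited source, and your argument stands as a self-contained verification of the kind the present paper omits. The heart of your proof, and its genuinely nontrivial step, is the conversion of the multiplicative hypothesis $z\cdot z'=z$ into the additive identity $z+z'=z$, which you do correctly: writing $z'=0\cdot b$, you first get $0\cdot 0=0$, hence $0\cdot z=z$ for $z\in 0\cdot M$, hence $z\cdot z'=z\cdot b$, and then $z+z'=z\cdot b+0\cdot b=(z+0)\cdot b=z\cdot b=z$ by $(P_8)$, $(P_6)$, $(P_3)$ --- after which well-definedness, additivity, unit preservation, and the composition law $f_{z,z'}\circ f_{z',z''}=f_{z,z''}$ all fall out in one line each, and your argument that the cross terms collapse ($x z\in 0\cdot M$ with $0\cdot(xz)=z'z=z$, and $0\cdot w=w$ on $0\cdot M$ forces $xz=z$) is clean and entirely axiom-level. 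The one place you lean on imported structure rather than the ten axioms is the idempotency $z\cdot z=z$ (equivalently, reflexivity of $\leq$, or closure of the fibers $M_z$ under multiplication), which you attribute to the decomposition of pre-meadows with $\abf$ into rings from \cite{Dias_Dinis(23)}; this is legitimate, since the paper itself presupposes exactly that decomposition (it is the second half of Theorem~\ref{T:Directedlat}), but it is worth flagging that $z\cdot z=z$ does not follow as quickly from $(P_1)$--$(P_{10})$ alone as your parenthetical ``because $z$ is the zero of the ring $M_z$'' might suggest --- the direct axiom computation only yields $z\cdot z=z\cdot z+z$, so the fiber-ring structure (or the semilattice structure of $0\cdot M$) is doing real work there. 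With that citation made explicit, your proof is complete and arguably more informative than the paper's treatment, since it exposes precisely which axioms carry each step.
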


\begin{theorem}\label{T:Main}
    Let $n\geq 3$  and $p$ be a partition of $n$. Then $p$ is admissible if and only if there exists $M$, a pre-meadow with $\abf$,  such that $p$ is associated with $M$.
\end{theorem}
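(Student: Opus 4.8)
The plan is to prove both implications by converting the combinatorial condition of admissibility into the existence of suitable unital ring homomorphisms, and then passing between directed lattices of rings and pre-meadows with $\abf$ via Theorem~\ref{T:Directedlat}. The engine in both directions is the following consequence of (the proof of) Lemma~\ref{L:RingMorphisms}: if there is a unital ring homomorphism $f\colon R\to S$ between finite unital commutative rings, then every prime dividing $|S|$ divides $|R|$. Indeed, since $f(1_R)=1_S$, the additive order of $1_S$ divides that of $1_R$, which in turn divides $|R|$; on the other hand, decomposing $S$ into its primary components shows that the additive order of $1_S$ is divisible by every prime dividing $|S|$. Conversely, whenever every prime dividing $b$ divides $a$, the explicit project-and-include construction in Lemma~\ref{L:RingMorphisms} produces rings of orders $a$ and $b$ admitting such a homomorphism.

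For the implication that an associated partition must be admissible, I would start from a pre-meadow with $\abf$, say $M$, with which $p=a_0+\cdots+a_s+1$ is associated. By Theorem~\ref{T:Directedlat}, $M$ arises from a directed lattice of rings over the finite lattice $L=0\cdot M$, whose fibers $M_z$ are the rings and whose minimum is $\{\abf\}$. Since the parts sum to $|M|$, they are exactly the orders of the nontrivial fibers. Being finite, $L$ has a maximum $\top\neq\min(L)$, and the fiber $M_\top$ is a unital ring whose order equals one of the parts, say $a_k$. For every other part $a_i$, the corresponding fiber sits at some $x_i<\top$, so Lemma~\ref{P:Transitionmaps} supplies a ring homomorphism $M_\top\to M_{x_i}$, whence by the fact above every prime dividing $a_i$ divides $a_k$. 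As this holds for all parts, $a_k$ is divisible by every prime occurring in the partition, so $p$ is admissible.

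For the converse I would argue by an explicit construction. Given an admissible partition, choose a part $a_k$ divisible by every prime $p_1,\dots,p_r$ occurring among the parts, and write $a_k=p_1^{\gamma_1}\cdots p_r^{\gamma_r}$. Take the lattice $L$ with bottom $\bot$, top $\top$, and an antichain of middle elements indexed by the remaining parts, so that $\bot<m_i<\top$ for each $i$; one checks directly that meets and joins exist, so this is a lattice. Assign to $\bot$ the zero ring $\{\abf\}$, to $\top$ the ring $\Gamma_\top=\z_{p_1}^{\gamma_1}\times\cdots\times\z_{p_r}^{\gamma_r}$ of order $a_k$, and to each $m_i$ a ring $\Gamma_{m_i}$ of order $a_i$ built, as in Lemma~\ref{L:RingMorphisms}, as a product of elementary $p_j$-rings over the primes dividing $a_i$. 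The transition maps onto $\bot$ are the trivial maps to the zero ring, and the map $\Gamma_\top\to\Gamma_{m_i}$ is obtained by projecting $\Gamma_\top$ onto the blocks indexed by the primes dividing $a_i$ and then applying the project-and-include homomorphisms of Lemma~\ref{L:RingMorphisms}; this is possible precisely because every prime of $a_i$ divides $a_k$. By Theorem~\ref{T:Directedlat}, the resulting directed lattice of rings yields a pre-meadow with $\abf$ whose nontrivial fibers have orders $a_0,\dots,a_s$ and are pairwise disjoint, so $p$ is associated with it.

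The main obstacle I expect is the converse: the pairwise statement of Lemma~\ref{L:RingMorphisms} does not by itself produce a single top ring mapping simultaneously to all of the middle rings, so one must give the uniform construction above and verify the coherence axiom $f_{k,j}\circ f_{j,i}=f_{k,i}$ of a directed lattice of rings. Here this reduces to observing that every nontrivial composition factors trivially through the zero ring $\{\abf\}$ at the bottom. A secondary point requiring care is that equal parts $a_i=a_j$ still give genuinely distinct fibers, since they correspond to distinct disjoint blocks of the disjoint union, so the structure really realizes the partition; and the degenerate case $s=0$ simply collapses to the chain already used in Proposition~\ref{P:existence}.
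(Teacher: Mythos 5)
Your proof is correct and takes essentially the same approach as the paper: the forward implication uses the transition maps of Lemma~\ref{P:Transitionmaps} out of the top fiber (the paper's $M_0$, which is the maximum of $0\cdot M$) together with the divisibility constraint extracted from Lemma~\ref{L:RingMorphisms}, and the converse uses the same canonical product-ring construction arranged in a fan-shaped lattice, with your uniform-top-ring observation matching what the paper's construction does implicitly. The only cosmetic differences are that you pass through Theorem~\ref{T:Directedlat} where the paper writes out the meadow operations explicitly, and that you restate (and correctly justify) the divisibility consequence of Lemma~\ref{L:RingMorphisms} in universal form.
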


\begin{proof}
        Let $a_0+\cdots+a_s+1$ be an admissible  partition of $n$. We need to show that there exists $M$, a pre-meadow with $\abf$, associated with this partition. Without loss of generality we may assume that all prime divisors of $a_i$ with $i\in\{0,\cdots,s\}$ are also prime divisors of $a_0$. Let $a_0:=p_1^{\alpha_1}\cdots p_t^{\alpha_t}$ be the prime factorization of $a_0$, and $a_i=p_1^{\alpha_{i,1}}\cdots p_t^{\alpha_{i,t}}$ the prime factorization of $a_i$ with $\alpha_{i,j}\in \n$, where $i\in \{1,\cdots,s\}$ and $j\in\{1,\cdots, t\}$.

    From the first part of the proof of Lemma \ref{L:RingMorphisms}, there exist rings $R_0,\cdots,R_t$ such that for all $i\in\{1,\cdots,t\}$ there is a ring homomorphism $f_i:R_0\rightarrow R_i$. Then the set $M=\left(\bigsqcup_{i=0,\cdots,s} R_i\right)\bigsqcup \{\abf\}$ with the operations
    \begin{itemize}
        \item $x+y=\begin{cases}
                x+y,&\text{ if } x,y\in R_i\\
                x+f_i(y),&\text{ if } x\in R_i, y\in R_0\\
                \abf,& \text{otherwise}
        \end{cases}$
        \item $x\cdot y=\begin{cases}
                x\cdot y,& \text{ if } x,y\in R_i\\
                x\cdot f_i(y),& \text{ if } x\in R_i,y\in R_0\\
                \abf,& \text{otherwise} 
        \end{cases}$
    \end{itemize}
    is a pre-meadow with $\abf$. Moreover, for each $x\in 0\cdot M\setminus\{\abf\}$ we have that $M_x=R_i$, where $x\in R_i$. Hence $|M_x|=a_i$, by construction. That is, the partition $a_0+\cdots+a_s+1$ is associated with $M$.

    Assume now that $M$ is a pre-meadow with $\abf$ associated with the partition $a_0+\cdots+a_s+1$ of $n$. With a convenient rearrangement of the indices, we may assume that $|M_0|=a_0$. Let $x_i\in 0\cdot M$ be such that $|M_{x_i}|=a_i$. Then by Lemma \ref{P:Transitionmaps}, the function:
    \begin{align*}
        f_{x_i,0}:M_0&\rightarrow M_{x_i}\\
                x&\mapsto x+ x_i
    \end{align*}
    is a ring homomorphism. By Lemma \ref{L:RingMorphisms}, the prime divisors of $a_i$ are also prime divisors of $a_0$, and so the partition is admissible.
\end{proof}

\begin{exmp}
    Consider the admissible partition $41=30+5+3+2+1$. It can be shown that $41$ is the smallest $n$ that has an admissible partition not associated with a common meadow. In fact, since the only ring with $30$ elements is $\z_2\times\z_3\times\z_5$ the only pre-meadow with $\abf$ associated with this partition is 
    \[\begin{tikzcd}
	& {\z_2\times\z_3\times\z_5} \\
	{\z_2} & {\z_3} & {\z_5} \\
	& {\{\abf\}}
	\arrow["\pi_1",from=1-2, to=2-1]
	\arrow["\pi_2",from=1-2, to=2-2]
	\arrow["\pi_3",from=1-2, to=2-3]
	\arrow[from=2-1, to=3-2]
	\arrow[from=2-2, to=3-2]
	\arrow[from=2-3, to=3-2]
\end{tikzcd}\]
    which is not a common meadow since $J_{([0]_2,[1]_3,[1]_5)}=\{[0]_3,[0]_5,\abf\}$ has clearly two distinct maximal elements.
\end{exmp}

 \begin{corollary}\label{P:gcd}
      Let $n\in \n$ be a positive natural number. If $n$ cannot be written as a sum of natural numbers $\{a_i\}_{i\in I}$ greater than or equal to $2$ such that, for $i\in I$, $gcd(a_0,a_i)\neq 1$, then all common meadows with $n+1$ elements are rings with $\abf$. In particular, if $x\neq \abf$, then $0\cdot x = 0$.
\end{corollary}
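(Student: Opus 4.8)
The plan is to argue by contraposition, routing everything through the partition framework established above. The guiding observation is that ``$M$ is a \emph{ring with $\abf$}'' is exactly the statement that the partition of $n+1$ associated with $M$ is trivial in the sense that it has a single nonzero part $a_0=n$ — equivalently, that the lattice $0\cdot M$ has only the two elements $0$ and $\abf$. So I would reformulate the conclusion as: every common meadow of order $n+1$ has an associated partition $a_0+\cdots+a_s+1$ with $s=0$.

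First I would fix an arbitrary common meadow $M$ of order $n+1$. Being in particular a pre-meadow with $\abf$, it is associated with some partition $a_0+\cdots+a_s+1$ of $n+1$, so that $a_0+\cdots+a_s=n$, where $a_i=|M_{x_i}|$ for the elements $x_i\in 0\cdot M\setminus\{\abf\}$. Since $M$ is a pre-meadow with $\abf$, the element $\abf$ is the \emph{unique} $z\in 0\cdot M$ with $|M_z|=1$; hence each part satisfies $a_i\geq 2$ for $i\in\{0,\dots,s\}$. Because $M$ realizes this partition, Theorem~\ref{T:Main} guarantees that the partition is admissible. After relabelling I may assume $a_0$ is the distinguished part, so that every prime divisor of each $a_i$ is also a prime divisor of $a_0$. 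As each $a_i\geq 2$ has at least one prime divisor, and that prime divides $a_0$, it follows that $\gcd(a_0,a_i)\neq 1$ for every $i\in\{0,\dots,s\}$.

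The final step is to invoke the hypothesis. If $s\geq 1$, then $n=a_0+\cdots+a_s$ is a decomposition of $n$ into at least two summands, each at least $2$, with $\gcd(a_0,a_i)\neq 1$ for all $i$ — precisely the kind of decomposition the hypothesis rules out. This contradiction forces $s=0$, so $M=R_0\sqcup\{\abf\}$ is a single commutative ring together with $\abf$, that is, a ring with $\abf$. In this case $0\cdot M=\{0,\abf\}$, and for any $x\neq\abf$ (so $x\in R_0$) one gets $0\cdot x=0$, which yields the final assertion.

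I expect the main obstacle to lie not in any computation but in making the translation between the two conditions precise and in handling the edge cases. Concretely, one must confirm that admissibility supplies a \emph{single} part $a_0$ whose prime divisors dominate those of every other part, so that $\gcd(a_0,a_i)\neq 1$ holds simultaneously for all $i$ rather than merely pairwise within a subset; and one must read the hypothesis as forbidding decompositions into \emph{at least two} parts, since otherwise the one-part decomposition $n=n$ would make it vacuous. Both points become routine once the correspondence between pre-meadows with $\abf$ and admissible partitions (Theorem~\ref{T:Main}) is in hand, and the reduction of ``ring with $\abf$'' to the condition $s=0$ via the two-element lattice $0\cdot M$ is then immediate.
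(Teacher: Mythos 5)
Your proof is correct, but it reaches the conclusion by a different route than the paper. You treat the statement as a true corollary of Theorem~\ref{T:Main}: the partition of $n+1$ canonically associated with $M$ (all parts $\geq 2$, since $\abf$ is the \emph{unique} singleton fibre) must be admissible, admissibility supplies one dominant part $a_0$ with $\gcd(a_0,a_i)\neq 1$ for all $i$, and the hypothesis then forces $s=0$. The paper instead argues directly, without invoking Theorem~\ref{T:Main} or admissibility at all: supposing some $x\neq \abf$ with $0\cdot x\neq 0$, it takes the transition map $f_{0\cdot x,0}\colon M_0\to M_{0\cdot x}$, $y\mapsto y+0\cdot x$ (Lemma~\ref{P:Transitionmaps}), observes that its kernel is proper because $M_{0\cdot x}$ is not the zero ring, and combines the first isomorphism theorem with Lagrange's theorem to get $2\leq |M_0/\ker f_{0\cdot x,0}|\leq \gcd\left(|M_0|,|M_{0\cdot x}|\right)$, which together with the fibre-size partition of $n$ contradicts the hypothesis. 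The two arguments are close cousins---the backward direction of Theorem~\ref{T:Main} is itself proved with the same transition maps via Lemma~\ref{L:RingMorphisms}---but yours buys brevity and actually explains the label ``Corollary'', while the paper's is self-contained, pins the distinguished part down concretely as $a_0=|M_0|$, and yields the slightly finer fact that the quotient $M_0/\ker f_{0\cdot x,0}$ is isomorphic to a subring of every fibre, so its order divides both fibre sizes. Your two flagged subtleties (reading the hypothesis as excluding the one-part sum $n=n$, and extracting a \emph{single} dominant part from admissibility rather than pairwise conditions) are exactly the right points to worry about, and both are handled the same way, implicitly, in the paper's argument; the only further pedantic remark is that Theorem~\ref{T:Main} is stated for partitions of $n\geq 3$, so your appeal to it needs $n\geq 2$ in the corollary, the case $n=1$ being degenerate (no common meadow of order $2$ exists) for both proofs alike.
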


We can use Corollary~\ref{P:gcd} to show that for $n=3,4,6,8$ there exists a unique common meadow of that order. These common meadows are listed in Figure~\ref{F:M3_4_6_8}.

    \begin{corollary}
    There exists exactly one common meadow of order $3,4,6$ or $8$.
    \end{corollary}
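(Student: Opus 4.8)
The plan is to apply Corollary~\ref{P:gcd} to each order and then count the rings that can appear. Since a common meadow of order $n$ decomposes as a disjoint union of its rings together with $\abf$, the orders of those rings form an associated partition $a_0+\cdots+a_s+1$ of $n$ with each $a_i\geq 2$. For $n\in\{3,4,6,8\}$ this forces the ring parts to sum to $m:=n-1\in\{2,3,5,7\}$, all of which are prime. I would first show that in each of these four cases the meadow must consist of a single ring, and then observe that a ring of prime order is unique.

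First I would verify the hypothesis of Corollary~\ref{P:gcd} for each $m\in\{2,3,5,7\}$, namely that $m$ admits no decomposition into two or more summands, each $\geq 2$, in which one distinguished part $a_0$ satisfies $\gcd(a_0,a_i)\neq 1$ for every other part $a_i$. This is a finite check: for $m=2$ and $m=3$ there is no way at all to write $m$ as a sum of at least two summands each $\geq 2$, since the smallest such sum is $2+2=4$; for $m=5$ the only such sum is $5=2+3$, and $\gcd(2,3)=1$; and for $m=7$ the possible sums are $5+2$, $4+3$ and $3+2+2$, in each of which any two distinct summand values ($2$ and $5$, $3$ and $4$, or $2$ and $3$) are coprime, so no choice of $a_0$ meets the gcd condition. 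Consequently, by Corollary~\ref{P:gcd}, every common meadow $M$ of order $n$ satisfies $0\cdot x=0$ for all $x\neq\abf$, so that $0\cdot M=\{0,\abf\}$ and $M=M_0\sqcup\{\abf\}$, where $M_0$ is a unital commutative ring of order $m$.

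It then remains to count the unital commutative rings of order $m$, and here I would use that $m$ is prime: the additive group of such a ring is cyclic of order $m$ and is generated by the multiplicative identity $1$, so every element is an integer multiple of $1$ and the multiplication is thereby forced, giving $M_0\cong\z_m$. Hence $M=\z_m\sqcup\{\abf\}$ is the only candidate. Conversely, this structure genuinely is a common meadow: its associated lattice $0\cdot M=\{0,\abf\}$ is a two-element chain, hence totally ordered, so by the remark following Theorem~\ref{T:Directedlat} the pre-meadow with $\abf$ built from the corresponding directed lattice of rings is a common meadow. Combining existence and uniqueness yields exactly one common meadow of each order $3,4,6,8$.

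The routine parts are the finite gcd-decomposition check and the classification of rings of prime order. I expect the only points deserving care to be confirming that no admissible multi-ring partition exists for these four orders (which the coprimality computations above rule out, in line with Theorem~\ref{T:Main}) and checking that the surviving single-ring candidate is in fact a common meadow, which is guaranteed by the total order of its two-element lattice.
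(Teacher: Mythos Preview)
Your argument is correct and follows the same route as the paper: invoke Corollary~\ref{P:gcd} to reduce to a single ring component, then use uniqueness of the unital commutative ring of prime order. Your version is actually more explicit than the paper's: the paper simply lists the unique admissible partitions $2+1$, $3+1$, $5+1$, $7+1$ and invokes Corollary~\ref{P:gcd} without spelling out the gcd verification, whereas you carry out the finite check of decompositions of $m\in\{2,3,5,7\}$ directly and also justify why the surviving candidate $\z_m\sqcup\{\abf\}$ really is a common meadow via the total order on its two-element lattice.
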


\begin{proof}
The admissible partitions of  $3,4,6$ or $8$ are, respectively, 
 \begin{multicols}{4}
    \begin{itemize}
    \item 3=2+1
    \item 4=3+1
    \item 6=5+1
    \item 8=7+1
\end{itemize}
\end{multicols}
By Corollary~\ref{P:gcd} there is only one common meadow for each of these orders: the common  meadows presented in Figure~\ref{F:M3_4_6_8}.
\end{proof}

\begin{figure}[H]
    \centering
    \begin{tikzcd}
	{\z_2} & {\z_3} & {\z_5} & {\z_7}\\
	{\{\abf\}} & {\{\abf\}} & {\{\abf\}} & {\{\abf\}}\\ 
        \arrow[from=1-1, to=2-1]
	\arrow[from=1-2, to=2-2]
	\arrow[from=1-3, to=2-3]
        \arrow[from=1-4, to=2-4]
\end{tikzcd}
    \caption{The only common meadows of order $3,4,6$ and $8$}
    \label{F:M3_4_6_8}
\end{figure}

So, the first interesting cases are when $n=5$ and $n=7$. These cases will be dealt with in the next section. 
\begin{exmp}\label{E:smallestn}
    The smallest pre-meadow with $\abf$ that is not a common meadow is the one defined by the lattice:
    \[\begin{tikzcd}
	& {\z_2\times\z_2} \\
	{\z_2} && {\z_2} \\
	& {\{\abf\}}
	\arrow["{\pi_1}"', from=1-2, to=2-1]
	\arrow["{\pi_1}", from=1-2, to=2-3]
	\arrow[from=2-3, to=3-2]
	\arrow[from=2-1, to=3-2]
\end{tikzcd}\]
    Since the set $J_{([1]_2,[0]_2)}$ has two distinct maximal elements it is indeed not a common meadow.
\end{exmp}

\section{Common meadows of orders 5 and 7}\label{S:5_and_7}

A simple observation is that, in order for a partition to represent a directed lattice it must contain exactly one occurrence of the term $1$, so that the trivial ring can be counted exactly once. Additionally, we recall that if a directed lattice represents a total order, or if all the maps are isomorphisms then the directed lattice is associated with a common meadow. 

\subsection{Common meadows of order 5}\label{S:5}

    As seen in Example~\ref{E:5}, the admissible partitions of $5$ are
     \begin{multicols}{2}
    \begin{itemize}
    \item $5=4 + 1$
    \item $5=2 + 2 + 1$
  \end{itemize}
     \end{multicols}
     These partitions correspond to the following directed lattices

\[\begin{tikzcd}
	\bullet & \bullet\\
	\bullet & \bullet \\
	& \bullet &
	\arrow[from=1-1, to=2-1]
	\arrow[from=1-2, to=2-2]
        \arrow[from=2-2, to=3-2]
\end{tikzcd}\]

Since, up to isomorphism, the only ring with two elements is $\z_2$, and since $\z_2$ is generated by $1$ we have that the only homomorphism from $\z_2$ to $\z_2$  is the identity. There are, in turn, $4$ rings with $4$ elements. So, there exist exactly the following $5$ common meadows with $5$ elements (note that the lattices define a total order and so they are in fact common meadows):
     \[\begin{tikzcd}
	{\z_4} & {\z_2} & {\z_2 \times \z_2} & {\f_4} 
 & {\mM_2}  \\
	{\{\abf\}} & {\z_2} & {\{\abf\}} & {\{\abf\}} & {\{\abf\}}\\
	& {\{\abf\}} && && 
        \arrow[from=1-1, to=2-1]
	\arrow[from=2-2, to=3-2]
        \arrow["\Id",from=1-2, to=2-2]
	\arrow[from=1-3, to=2-3]
        \arrow[from=1-4, to=2-4]
        \arrow[from=1-5, to=2-5]
\end{tikzcd}\]
where $\f_4$ is the unique field with four elements and $\mM_2=\left\{ \begin{bmatrix}
x & y \\
0 & x 
\end{bmatrix} \mid x,y\in \z_2\right\} $.

\subsection{Common meadows of order 7}\label{E:7}

    The admissible partitions of $7$ are
    \begin{multicols}{2}
    \begin{itemize}
        \item $7=6+1$
        \item $7=4+2+1$
        \item $7=3+3+1$
        \item $7=2+2+2+1$        
    \end{itemize}
    \end{multicols}
These partitions correspond to lattices of orders $2,3,3$ and $4$ respectively. The following diagram illustrates all the possible lattices with $2,3$ and $4$ elements:

\[\begin{tikzcd}
	&&&&& \bullet \\
	\bullet & \bullet && \bullet && \bullet \\
	\bullet & \bullet & \bullet && \bullet & \bullet \\
	& \bullet && \bullet && \bullet
	\arrow[from=2-2, to=3-2]
	\arrow[from=3-2, to=4-2]
	\arrow[from=2-4, to=3-3]
	\arrow[from=2-4, to=3-5]
	\arrow[from=3-5, to=4-4]
	\arrow[from=3-3, to=4-4]
	\arrow[from=2-1, to=3-1]
	\arrow[from=1-6, to=2-6]
	\arrow[from=2-6, to=3-6]
	\arrow[from=3-6, to=4-6]
\end{tikzcd}\]

We conclude that the lattice of a pre-meadow with $\abf$ with $7$ elements  must be isomorphic to one of these. In fact, since up to isomorphism the only ring with two elements is $\z_2$, with $3$ elements is $\z_3$ and with $6$ elements is $\z_6$ and since both $\z_3$ and $\z_2$ are generated by $1$ we have that the only homomorphism from $\z_2$ to $\z_2$ (and from $\z_3$ to $\z_3$) is the identity. Hence, the pre-meadow with $\abf$ with $7$ elements associated with the partitions $6+1$, $3+3+1$ and $2+2+2+1$ are the following 
\[\begin{tikzcd}
	&&&&& {\z_2} \\
	{\z_6} & {\z_3} && {\z_2} && {\z_2} \\
	{\{\abf\}} & {\z_3} & {\z_2} && {\z_2} & {\z_2} \\
	& {\{\abf\}} && {\{\abf\}} && {\{\abf\}}
	\arrow["\Id", from=2-2, to=3-2]
	\arrow[from=3-2, to=4-2]
	\arrow["\Id", from=2-4, to=3-3]
	\arrow["\Id"', from=2-4, to=3-5]
	\arrow[from=3-5, to=4-4]
	\arrow[from=3-3, to=4-4]
	\arrow["\Id", from=1-6, to=2-6]
	\arrow["\Id", from=2-6, to=3-6]
	\arrow[from=3-6, to=4-6]
	\arrow[from=2-1, to=3-1]
\end{tikzcd}\]

One can easily see that these are in fact common meadows.

We now turn to the pre-meadow with $\abf$ of order $7$ associated with the partition $4+2+1$. There are two possibilities: either $|M_0|=4$, or $|M_0|=2$. 

Since $\z_2$ is generated by $1$, there exists at most one homomorphism from $\z_2$ to a ring with $4$ elements. A ring homomorphism from $\z_2$ to $\z_4$ does not exist since they have a different  characteristic. Then the lattices associated with the partition $4+2+1$ such that $|M_0|=2$ are the following.
\[\begin{tikzcd}
	{\z_2} & {\z_2} & {\z_2} \\
	{\z_2\times\z_2} & {\f_4} & {\mM_2} \\
	{\{\abf\}} & {\{\abf\}} & {\{\abf\}}
	\arrow[from=2-1, to=3-1]
	\arrow[from=1-1, to=2-1]
	\arrow[from=1-2, to=2-2]
	\arrow[from=1-3, to=2-3]
	\arrow[from=2-2, to=3-2]
	\arrow[from=2-3, to=3-3]
\end{tikzcd}\]

Since the order on these lattices is total they are also common meadows.

We now turn to the case $|M_0|=4$. If $R_4$ is a ring with $4$ elements, then the homomorphisms from $R_4$ to $\z_2$ are in bijection with maximal ideals of $R_4$. 

Since $\f_4$ is a field, there is no homomorphism from $\f_4$ to $\z_2$. The maximal ideals of $\z_2\times \z_2$ are exactly $\{0\}\times \z_2$ and $\z_2\times \{0\}$, which corresponds to the projection homomorphisms. In the ring $\z_4$ the only maximal ideal is the ideal generated by $2\in \z_4$ which corresponds to the unique homomorphism from $\z_4$ to $\z_2$ that sends $1$ to $1$. Finaly, it is easy to see that in $\mM_2$ the unique maximal ideal is the one generated by the matrix $ \begin{bmatrix}
0 & 1 \\
0 & 0 
\end{bmatrix}  $, which corresponds to the homomorphism $ \begin{bmatrix}
x & y \\
0 & x 
\end{bmatrix}  \mapsto x$. 
We conclude that the lattices associated with pre-meadows with $\abf$ of order $7$ such that $|M_0|=4$ are

\[\begin{tikzcd}
	{\z_2\times \z_2} & {\z_2\times \z_2} & {\z_4} & {\mM_2} \\
	{\z_2} & {\z_2} & {\z_2} & {\z_2} \\
	{\{\abf\}} & {\{\abf\}} & {\{\abf\}} & {\{\abf\}}
	\arrow[from=2-1, to=3-1]
	\arrow["{\pi_1}", from=1-1, to=2-1]
	\arrow["{\pi_2}", from=1-2, to=2-2]
	\arrow[from=1-3, to=2-3]
	\arrow[from=2-2, to=3-2]
	\arrow[from=2-3, to=3-3]
	\arrow[from=2-4, to=3-4]
	\arrow[from=1-4, to=2-4]
\end{tikzcd}\]

Where the maps are the ones mentioned before. Again, since the order is total, theses are common meadows. All that there is left to do is to check if any of these common meadows are isomorphic. In fact, the common meadows associated with the first and second lattice are isomorphic.

Consider the ring isomorphism  $f:\z_2\times\z_2\rightarrow \z_2\times\z_2$ defined by $f(x,y)=(y,x)$. It is easy to see that the following diagram commutes
\[\begin{tikzcd}
	{\z_2\times \z_2} & {\z_2\times \z_2} \\
	{\z_2} & {\z_2} \\
	{\{\abf\}} & {\{\abf\}}
	\arrow[from=2-1, to=3-1]
	\arrow["{\pi_1}", from=1-1, to=2-1]
	\arrow["{\pi_2}", from=1-2, to=2-2]
	\arrow[from=2-2, to=3-2]
	\arrow["f", from=1-1, to=1-2]
	\arrow["\Id"{description}, from=2-1, to=2-2]
\end{tikzcd}\]

and that it defines a common meadow isomorphism (see \cite[Section 4.3]{Dias_Dinis(23)}). We conclude that there exist $10$ pairwise non-isomorphic pre-meadow with $\abf$ of order $7$, and they are all in fact common meadows.

\section{An algorithm to enumerate pre-meadows with $\abf$}\label{S:algorithm}

The first author developed a code in GAP 4 \cite{GAP4} that allows to count the number of pre-meadows with $\abf$ of a given order and construct them. The code is available at \cite{github}.
 With this code, one is able to obtain all finite pre-meadows with $\abf$ from order $3$ up to order $16$. Hence the code gives a computational solution for these orders. 
 The fact that we are restricted to these orders has to do with two limitations of the GAP 4 database. The first limitation is that GAP 4 only has stored the rings of order $15$ and lower, so the fact that the code presented relies on this list prevents us from constructing, for example, all the pre-meadows with $\abf$ of order $17$ that are related with the admissible partition $16+1$. A second limitation has to do with the fact that GAP 4 has stored all the finite semigroups, but only up to order $8$ \cite{smallsemi}. In particular, the idempotent commutative semigroups which are equivalent to lattices, are stored but again only up to order $8$. This means that, in this way, one is not able to construct pre-meadows with $\abf$ associated with admissible partitions of the form $a_0 + \cdots +a_s+ 1$ with $s+2>8$. We would like to point out, however, that these are not limitations of the code but limitations on the database of GAP 4 which can be solved by adding to the database of the software the list of rings of order greater than $15$ and lattices of order greater than $8$.

\subsection{An overview of the code}\label{S:code}

The code uses the following GAP 4 packages: the package Semigroups \cite{semi} for basic operations on semigroups, the package Smallsemi \cite{smallsemi} for the database of small semigroups (up to order $8$), and the package Digraphs \cite{Digraphs} in order to provide a visualization of the directed lattices.

The code has several functions that we define in order to construct all pre-meadows with $\abf$ of a given order. 

The main function is  \texttt{enumeration\_meadows} which constructs all the pre-meadows with $\abf$ of a given order. The way that this function works is similar to the construction on the examples in Section~\ref{S:5_and_7}. We give a simplified version of the function \texttt{enumeration\_meadows} and describe the auxiliary functions.

\pagebreak 
\begin{lstlisting}
	meadow = [];
	for part in Adm_Part(n) do
		for s in C_Idempotent(Size(part)) do
			for perm in SymmetricGroup(Size(part)) do
				if perm = () then
					Add(meadow, [part,s]);
				else
					permpart := Permuted(part,perm);
					if Isomorph(meadow,permpart,s) then
						Add(meadow, [permpart,s]);
	fi;fi;od;od;od;

	list_meadow := [];
	for m in meadow do
		edges := Semigroup_Edges(m[2]);
		list_rings:=Latt_Rings(m[1]);
		for m_ring in list_rings do
			if Morphism_Sort(m_ring) then
				for h_edges in Edges_Morphism(edges,m_ring) do
					if Check_Composition(edges,h_edges,m_ring) then
						isomorphism := true;
						dir_lat:=[m_ring,edges,h_edges];
						for dir_meadow in list_meadow do
							if Meadow_Isomorphism(dir_lat,dir_meadow)=false then
								isomorphism:= false;
							fi;
						od;
						if isomorphism then
							Add(meadow,[m_ring,edges,h_edges]);
	fi;fi;od;fi;od;od;

\end{lstlisting}

\begin{itemize}
    \item \texttt{Adm\_Part(n)}: accepts a natural number \texttt{n} and returns the list of admissible partitions of $n$.
    \item \texttt{C\_Idempotent(n)}: accepts a natural number \texttt{n} and returns the list of all commutative idempotent semigroups of order $n$.
    \item \texttt{Isomorph(meadow,permpart,s)}: accepts a list \texttt{meadow}, a list \texttt{permpart} and a matrix \texttt{s}, and checks if in the list \texttt{meadow} there are isomorphic lattices defined by \texttt{permpart} and \texttt{s}, returning \texttt{true} if there is no isomorphic lattice and \texttt{false} otherwise.
    \item \texttt{Semigroup\_Edges(s)}: accepts a commutative idempotent semigroup \texttt{s} and returns the order relation defined by the semigroup \texttt{s}, i.e.\ the order defined by $x\leq y$ if $x*y=x$.
    \item \texttt{Latt\_Rings(permpart)}: accepts a list of numbers \texttt{permpart} and returns all lists of rings where the order of the $i^{\rm th}$ ring is equal to the $i^{\rm th}$ element of \texttt{permpart}.
    \item \texttt{Egdges\_Morphism(edges,m\_ring)}: accepts a list \texttt{edges} consisting on pairs
    of numbers and a list \texttt{m\_ring} consisting on a list of rings, and returns all 
    possible labels of the pairs in \texttt{edges} with ring homomorphisms consistent with \texttt{m\_ring}.
    \item \texttt{Check\_Composition(edges,h\_edges,m\_ring)}: accepts a list \texttt{edges} consisting on pairs of numbers, a list \texttt{m\_ring} consisting on a list of rings and \texttt{h\_edges} a list of labels of \texttt{edges} by ring homomorphisms, and returns \texttt{true} if the lattice defined by the ring homomorphisms is a commutative diagram.
    \item \texttt{Meadow\_Isomorphism(dir\_lat,dir\_meadow)}: accepts two lists \texttt{dir\_lat} and \texttt{dir\_meadow} and returns \texttt{true} if the pre-meadows defined by the lists are isomorphic, and false otherwise.
    \item \texttt{enumeration\_meadows(n)}: accepts a natural number $\texttt{n}$ (between $3$ and $15$) and returns all the pre-meadows with $\abf$ of that order and the number of them.
\end{itemize}

\subsection{On the output of the code}

The code outputs the list \texttt{list\_meadow} with all the pre-meadows with $\abf$ of order $n$. The output is composed of three parts. The first part consists of a list of rings of the form $[r,n]$ denoting the $r^{\rm th}$ ring of order $n$ (see Figure~\ref{fig:ringsGAP4}).

\begin{figure}[H]
    \centering
\begin{tabular}{|c|c|c|c|c|c|c|c|c|c|}
\hline
Code: & [1,1] & [2,2]  & [4,3]  & [4,9]           & [4,10]           & [4,11]           & [5,2]  & [6,4]  & [7,2]  \\ \hline
Ring  & $\{\abf\}$   & $\z_2$ & $\z_4$ & $\mathcal{M}_2$ & $\z_2\times\z_2$ & $\mathbb{F}_{4}$ & $\z_5$ & $\z_6$ & $\z_7$ \\ \hline
\end{tabular}
    \caption{List of some rings stored in GAP 4}
    \label{fig:ringsGAP4}
\end{figure}

These rings, which can be obtained using the GAP function \texttt{SmallRing(r,n)}, label the vertices of the lattice.
The second part is a list of pairs $[i,j]$ that should be read as "there exists an edge from the vertex $i$ to the vertex $j$". The third part is a list of ring homomorphisms that label the edges defined by the second part of the output. Here, each function $[a,b,c]\to [x,y,z]$ should be read as the ring homomorphism defined on the generators $a,b$ and $c$ that sends $a$ to $x$, $b$ to $y$, and $c$ to $z$.

As an example, we present the fifth pre-meadow with $\abf$, out of $41$, of order $9$, i.e.\ the fifth element of the list \texttt{enumeration\_meadows(9)}:
\begin{itemize}
    \item $[ [ 1, 1 ], [ 2, 2 ], [ 2, 2 ], [ 4, 3 ] ]$
    \item $[ [ 2, 1 ], [ 3, 1 ], [ 4, 1 ], [ 4, 2 ], [ 4, 3 ] ]$
    \item $[ [ a ] \to [ 0*a ], [ a ] \to [ 0*a ], [ a ] \to [ 0*a ], [ a ] \to [ a ], [ a ] \to [ a ] ]$
    
\end{itemize}

In this case the lattice will have $4$ points, which is the number of elements of the first list of the output, and the first point is labelled by the zero ring that we are denoting by $\{\abf\}$  (which corresponds to the pair $[1,1]$), the second and third are both labelled by the ring $\z_2$ (which corresponds to the pair $[2,2]$), and the fourth is labelled by the ring $\z_4$ (which corresponds to the pair $[4,3]$). 

From the second part, we have an homomorphism from all vertices to the vertex $1$ (because we have the pairs $[2,1],[3,1],[4,1]$) and, additionally, we have an homomorphism from the vertex $4$ to the vertex $2$ and to the vertex $3$ (because we have the pairs $[4,2]$ and $[4,3]$). 

Finally, the homomorphism corresponding to the edge $[2,1]$ is denoted by $[ a ] \to [ 0*a ]$ which means that the unique generator of the ring $\z_2$ is sent to $0*a=0=\abf$, and similarly for the edges $[3,1]$ and $[4,1]$. The homomorphism corresponding to the edge $[4,2]$ is denoted by $[ a ] \to [ a ]$ which means that it is the only homomorphism that sends the unit of $\z_4$ to the unit of $\z_2$. For the edge $[4,3]$ the reasoning is similar. 

\begin{figure}
\scriptsize
    \centering
    \[\begin{tikzcd}
	& 4 &&&& {[4,3]} &&&& {\mathbb{Z}_4} \\
	2 && 3 && {[2,2]} && {[2,2]} && {\mathbb{Z}_2} && {\mathbb{Z}_2} \\
	& 1 &&&& {[1,1]} &&&& {\{\abf\}}
	\arrow[from=1-2, to=2-1]
	\arrow[from=1-2, to=2-3]
	\arrow[from=2-3, to=3-2]
	\arrow[from=2-1, to=3-2]
	\arrow["{[ a ] \to [ a ]}"', from=1-6, to=2-5]
	\arrow["{[ a ] \to [ a ]}", from=1-6, to=2-7]
	\arrow["{[ a ] \to [ 0*a ]}"', from=2-5, to=3-6]
	\arrow["{[ a ] \to [ 0*a ]}", from=2-7, to=3-6]
	\arrow[from=2-9, to=3-10]
	\arrow[from=2-11, to=3-10]
	\arrow["{1\mapsto 1}"', from=1-10, to=2-9]
	\arrow["{1\mapsto 1}", from=1-10, to=2-11]
\end{tikzcd}\]
    \caption{From the output to common meadows}
    \label{fig:output}
\end{figure}

In Figure \ref{fig:output} we can see a schematic diagram of the whole process. We have, on the left, the lattice given by the first part of the code's output and, in the center. the lattice obtained after the entire output of the code is added. Using the GAP 4 function \texttt{SmallRing(r,n)} to get the $r^{\rm th}$ ring of order $n$ we obtain the lattice on the right.

In Figure~\ref{Table} we present a summary of the number of lattices, commutative rings, pre-meadows with $\abf$ and admissible partitions for values of $n$ from $3$ up to $16$. The number of pre-meadows with $\abf$  is also plotted in Figure~\ref{fig:graph}. Figure \ref{fig:lowerbound} presents some lower bounds for $n=11$ to $30$.

\begin{figure}
    \centering
\begin{tabular}{|c|c|c|c|c|}
\hline
\# & Lattices   & C. Rings & Pre-Meadows w/ $\abf$ & Adm. Part. \\ \hline
3  & 1          & 1        & 1       & 1          \\ \hline
4  & 2          & 4        & 1       & 1          \\ \hline
5  & 5          & 1        & 5       & 2          \\ \hline
6  & 15         & 1        & 1       & 1          \\ \hline
7  & 53         & 1        & 10      & 4          \\ \hline
8  & 222         & 10       & 1       & 1          \\ \hline
9  & 1078       & 4        & 41      & 5          \\ \hline
10 & 5994       & 1        & 7       & 3          \\ \hline
11 & 37622      & 1        & 122     & 8          \\ \hline
12 & 262776     & 1        & 2       & 2          \\ \hline
13 & 2018305    & 4        & 552     & 14         \\ \hline
14 & 16873364   & 1        & 6       & 3          \\ \hline
15 & 152233518  & 1        & 2355    & 17         \\ \hline
16 & 1471613387 & 37       & 60      & 11         \\ \hline
\end{tabular}
    \caption{Comparison of the number of lattices, commutative rings, pre-meadows with $\abf$ and admissible partitions from $n=3$ to $16$.}
    \label{Table}
\end{figure}

\begin{figure*}
    \begin{center}
    \includegraphics[width=\textwidth]{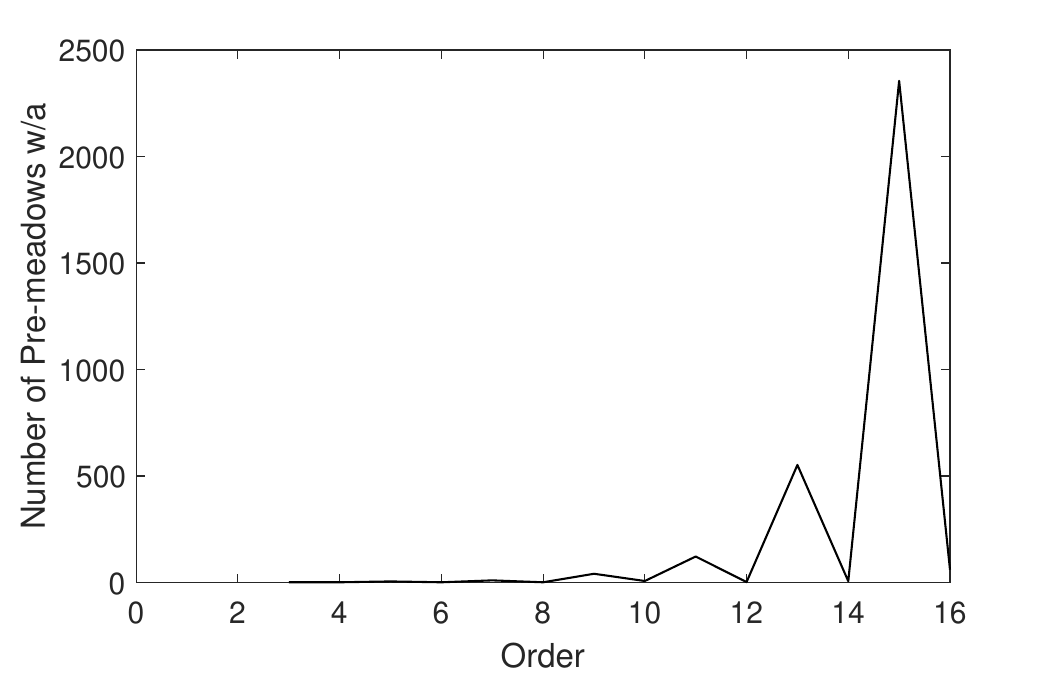}
    \end{center}
    \caption{Number of pre-meadows with $\abf$}    
    \label{fig:graph}
\end{figure*}

\begin{figure}
\begin{center}
\begin{tabular}{|c|c|| c | c|}
\hline
Order & Lower Bound & Order & Lower Bound \\ \hline
11    & 15 &21    & 37622            \\ \hline
12    & 1 & 22    & 15               \\ \hline
13    & 53 & 23    & 262776         \\ \hline
14    & 1 & 24    & 53           \\ \hline
15    & 222  & 25    & 2018305           \\ \hline
16    & 1 & 26    & 222              \\ \hline
17    & 1078  & 27    & 16873364          \\ \hline
18    & 2    & 28    & 1078          \\ \hline
19    & 5994  & 29    & 1471613387      \\ \hline
20    & 5    & 30    & 5994        \\ \hline

\end{tabular}
    \end{center}
    \caption{Lower bounds for the number of common meadows of orders $12$ to $30$}    
    \label{fig:lowerbound}
\end{figure}

\section{Final remarks}\label{S:finalremarks}

The lower bounds given by Propositions \ref{P:bound_odd} and \ref{P:bound_even}, illustrated in Figure~\ref{fig:lowerbound}, show that for both odd and even order, the number of  pre-meadows with $\abf$ (and of common meadows) have an exponential growth albeit at different rates. However, our bounds are very far from being optimal, as can be seen in Figure \ref{Table}. There are indeed some obvious ways to improve them. For example, the bound for odd order was established by observing that one has the obvious partition $$2n+1=\underbrace{2+\cdots+2}_{n \text{ times}}+1.$$
Now, of course one can always group some $2$'s into a $4$, say the first two, thus obtaining new admissible partitions which give rise to pre-meadows with $\abf$. A similar but more involved technique also allows to refine the lower bounds for even order. However, so far we do not know if it is possible to obtain \emph{optimal} lower bounds. The same question can be posed for upper bounds.

The "saw" shape of the graph given in Figure \ref{fig:graph} suggests that the number of pre-meadows with $\abf$  of order $2k+1$ is always greater than the number of pre-meadows with $\abf$  of order $2k+2$, for $k>2$. For the moment, a proof of such result escapes us. 

Our ultimate goal is to obtain a function enumerating all finite common meadows. 
Theorem \ref{T:Main} gives a relation between some partitions of $n$ and pre-meadows with $\abf$, which suggests a connection with combinatorics/number theory. Additionally, a numerical condition for an admissible partition to be associated with a common meadow could be enough to be able to enumerate common meadows. Of course, another possible route to solve (computationally) the problem is via a refinement of the code given in Section \ref{S:code}. 

\subsection*{Acknowledgments}

Both authors acknowledge the support of FCT - Funda\c{c}\~ao para a Ci\^{e}ncia e Tecnologia under the project: 10.54499/UIDB/04674/2020, and the research center CIMA -- Centro de Investigação em Matemática e Aplicações. 

The second author also acknowledges the support of CMAFcIO -- Centro de Matem\'{a}tica, Aplica\c{c}\~{o}es Fundamentais e Investiga\c{c}\~{a}o Operacional under the project UIDP/04561/2020.

        \bibliographystyle{siam}
\bibliography{References}

\end{document}